\documentclass[12pt,reqno,draft]{amsart} 
\usepackage{amssymb,amscd,url}


\begin{document}


\title[Primitive Divisors and Dynamical Zsigmondy Sets]
{Primitive Divisors, Dynamical Zsigmondy Sets, and Vojta's Conjecture}
\date{\today}
\author[Joseph Silverman]{Joseph H. Silverman}
\email{jhs@math.brown.edu}
\address{Mathematics Department, Box 1917
         Brown University, Providence, RI 02912 USA}
\subjclass{Primary: 37P15; Secondary: 11B50, 11G35, 11J97, 37P55}
\keywords{primitive divisor, Vojta conjecture}
\thanks{The author's research partially supported by NSF DMS-0854755
and Simon's Collaboration Grant \#241309.}


\allowdisplaybreaks

\hyphenation{ca-non-i-cal semi-abel-ian}


\newtheorem{theorem}{Theorem}
\newtheorem{lemma}[theorem]{Lemma}
\newtheorem{conjecture}[theorem]{Conjecture}
\newtheorem{proposition}[theorem]{Proposition}
\newtheorem{corollary}[theorem]{Corollary}
\newtheorem*{claim}{Claim}

\theoremstyle{definition}
\newtheorem*{definition}{Definition}
\newtheorem{example}[theorem]{Example}

\theoremstyle{remark}
\newtheorem{remark}[theorem]{Remark}
\newtheorem{question}[theorem]{Question}
\newtheorem*{acknowledgement}{Acknowledgements}


\newenvironment{notation}[0]{%
  \begin{list}%
    {}%
    {\setlength{\itemindent}{0pt}
     \setlength{\labelwidth}{4\parindent}
     \setlength{\labelsep}{\parindent}
     \setlength{\leftmargin}{5\parindent}
     \setlength{\itemsep}{0pt}
     }%
   }%
  {\end{list}}

\newenvironment{parts}[0]{%
  \begin{list}{}%
    {\setlength{\itemindent}{0pt}
     \setlength{\labelwidth}{1.5\parindent}
     \setlength{\labelsep}{.5\parindent}
     \setlength{\leftmargin}{2\parindent}
     \setlength{\itemsep}{0pt}
     }%
   }%
  {\end{list}}
\newcommand{\Part}[1]{\item[\upshape#1]}

\renewcommand{\a}{\alpha}
\renewcommand{\b}{\beta}
\newcommand{\g}{\gamma}
\renewcommand{\d}{\delta}
\newcommand{\e}{\epsilon}
\newcommand{\f}{\varphi}
\newcommand{\bfphi}{{\boldsymbol{\f}}}
\renewcommand{\l}{\lambda}
\renewcommand{\k}{\kappa}
\newcommand{\lhat}{\hat\lambda}
\newcommand{\m}{\mu}
\newcommand{\bfmu}{{\boldsymbol{\mu}}}
\renewcommand{\o}{\omega}
\renewcommand{\r}{\rho}
\newcommand{\rbar}{{\bar\rho}}
\newcommand{\s}{\sigma}
\newcommand{\sbar}{{\bar\sigma}}
\renewcommand{\t}{\tau}
\newcommand{\z}{\zeta}

\newcommand{\D}{\Delta}
\newcommand{\G}{\Gamma}
\newcommand{\F}{\Phi}

\newcommand{\ga}{{\mathfrak{a}}}
\newcommand{\gb}{{\mathfrak{b}}}
\newcommand{\gn}{{\mathfrak{n}}}
\newcommand{\gp}{{\mathfrak{p}}}
\newcommand{\gP}{{\mathfrak{P}}}
\newcommand{\gq}{{\mathfrak{q}}}

\newcommand{\Abar}{{\bar A}}
\newcommand{\Ebar}{{\bar E}}
\newcommand{\Kbar}{{\bar K}}
\newcommand{\Pbar}{{\bar P}}
\newcommand{\Sbar}{{\bar S}}
\newcommand{\Tbar}{{\bar T}}
\newcommand{\ybar}{{\bar y}}
\newcommand{\phibar}{{\bar\f}}

\newcommand{\Acal}{{\mathcal A}}
\newcommand{\Bcal}{{\mathcal B}}
\newcommand{\Ccal}{{\mathcal C}}
\newcommand{\Dcal}{{\mathcal D}}
\newcommand{\Ecal}{{\mathcal E}}
\newcommand{\Fcal}{{\mathcal F}}
\newcommand{\Gcal}{{\mathcal G}}
\newcommand{\Hcal}{{\mathcal H}}
\newcommand{\Ical}{{\mathcal I}}
\newcommand{\Jcal}{{\mathcal J}}
\newcommand{\Kcal}{{\mathcal K}}
\newcommand{\Lcal}{{\mathcal L}}
\newcommand{\Mcal}{{\mathcal M}}
\newcommand{\Ncal}{{\mathcal N}}
\newcommand{\Ocal}{{\mathcal O}}
\newcommand{\Pcal}{{\mathcal P}}
\newcommand{\Qcal}{{\mathcal Q}}
\newcommand{\Rcal}{{\mathcal R}}
\newcommand{\Scal}{{\mathcal S}}
\newcommand{\Tcal}{{\mathcal T}}
\newcommand{\Ucal}{{\mathcal U}}
\newcommand{\Vcal}{{\mathcal V}}
\newcommand{\Wcal}{{\mathcal W}}
\newcommand{\Xcal}{{\mathcal X}}
\newcommand{\Ycal}{{\mathcal Y}}
\newcommand{\Zcal}{{\mathcal Z}}

\renewcommand{\AA}{\mathbb{A}}
\newcommand{\BB}{\mathbb{B}}
\newcommand{\CC}{\mathbb{C}}
\newcommand{\FF}{\mathbb{F}}
\newcommand{\GG}{\mathbb{G}}
\newcommand{\PP}{\mathbb{P}}
\newcommand{\NN}{\mathbb{N}}
\newcommand{\QQ}{\mathbb{Q}}
\newcommand{\RR}{\mathbb{R}}
\newcommand{\ZZ}{\mathbb{Z}}

\newcommand{\bfa}{{\mathbf a}}
\newcommand{\bfb}{{\mathbf b}}
\newcommand{\bfc}{{\mathbf c}}
\newcommand{\bfe}{{\mathbf e}}
\newcommand{\bff}{{\mathbf f}}
\newcommand{\bfg}{{\mathbf g}}
\newcommand{\bfp}{{\mathbf p}}
\newcommand{\bfr}{{\mathbf r}}
\newcommand{\bfs}{{\mathbf s}}
\newcommand{\bft}{{\mathbf t}}
\newcommand{\bfu}{{\mathbf u}}
\newcommand{\bfv}{{\mathbf v}}
\newcommand{\bfw}{{\mathbf w}}
\newcommand{\bfx}{{\mathbf x}}
\newcommand{\bfy}{{\mathbf y}}
\newcommand{\bfz}{{\mathbf z}}
\newcommand{\bfA}{{\mathbf A}}
\newcommand{\bfF}{{\mathbf F}}
\newcommand{\bfB}{{\mathbf B}}
\newcommand{\bfD}{{\mathbf D}}
\newcommand{\bfG}{{\mathbf G}}
\newcommand{\bfI}{{\mathbf I}}
\newcommand{\bfM}{{\mathbf M}}
\newcommand{\bfzero}{{\boldsymbol{0}}}

\newcommand{\Aut}{\operatorname{Aut}}
\newcommand{\Base}{\mathcal{B}} 
\newcommand{\CanDiv}{\kappa} 
\newcommand{\CM}{\operatorname{CM}}   
\newcommand{\codim}{\operatorname{codim}}
\newcommand{\Disc}{\operatorname{Disc}}
\newcommand{\Div}{\operatorname{Div}}
\newcommand{\Dom}{\operatorname{Dom}}
\newcommand{\Ell}{\operatorname{Ell}}   
\newcommand{\End}{\operatorname{End}}
\newcommand{\Fbar}{{\bar{F}}}
\newcommand{\Gal}{\operatorname{Gal}}
\newcommand{\GL}{\operatorname{GL}}
\newcommand{\Index}{\operatorname{Index}}
\newcommand{\Image}{\operatorname{Image}}
\newcommand{\liftable}{{\textup{liftable}}}
\newcommand{\hhat}{{\hat h}}
\newcommand{\Ker}{{\operatorname{ker}}}
\newcommand{\Lift}{\operatorname{Lift}}
\newcommand{\MOD}[1]{~(\textup{mod}~#1)}
\newcommand{\Mor}{\operatorname{Mor}}
\newcommand{\Norm}{{\operatorname{\mathsf{N}}}}
\newcommand{\notdivide}{\nmid}
\newcommand{\normalsubgroup}{\triangleleft}
\newcommand{\NotDom}{\operatorname{NotDom}}
\newcommand{\NS}{\operatorname{NS}}
\newcommand{\odd}{{\operatorname{odd}}}
\newcommand{\onto}{\twoheadrightarrow}
\newcommand{\ord}{\operatorname{ord}}
\newcommand{\Orbit}{\mathcal{O}}
\newcommand{\Per}{\operatorname{Per}}
\newcommand{\PrePer}{\operatorname{PrePer}}
\newcommand{\PGL}{\operatorname{PGL}}
\newcommand{\Pic}{\operatorname{Pic}}
\newcommand{\Prob}{\operatorname{Prob}}
\newcommand{\Qbar}{{\bar{\QQ}}}
\newcommand{\rank}{\operatorname{rank}}
\newcommand{\Rat}{\operatorname{Rat}}
\newcommand{\rel}{{\textup{rel}}}  
\newcommand{\red}{{\textup{red}}}  
\newcommand{\Resultant}{\operatorname{Res}}
\renewcommand{\setminus}{\smallsetminus}
\newcommand{\shat}{{\hat s}}
\newcommand{\sing}{{\textup{sing}}} 
\newcommand{\Span}{\operatorname{Span}}
\newcommand{\Spec}{\operatorname{Spec}}
\newcommand{\Support}{\operatorname{Support}}
\newcommand{\That}{{\hat T}}  
\newcommand{\tors}{{\textup{tors}}}
\newcommand{\Trace}{\operatorname{Trace}}
\newcommand{\tr}{{\textup{tr}}} 
\newcommand{\UHP}{{\mathfrak{h}}}    
\newcommand{\<}{\langle}
\renewcommand{\>}{\rangle}

\newcommand{\ds}{\displaystyle}
\def\hw{\hidewidth}
\newcommand{\longhookrightarrow}{\lhook\joinrel\longrightarrow}
\newcommand{\longonto}{\relbar\joinrel\twoheadrightarrow}

\newcount\ccount \ccount=0
\def\cc{\global\advance\ccount by1{c_{\the\ccount}}}


\begin{abstract}
A primitive prime divisor of an element~$a_n$ of a
sequence~$(a_m)_{m\ge0}$ is a prime~$\gp$ that divides~$a_n$, but does
not divide~$a_m$ for all~$m<n$.  The Zsigmondy set~$\Zcal$ of the
sequence is the set of~$n$ such that~$a_n$ has no primitive prime
divisors. Let~$f:X\to X$ be a self-morphism of a variety, let~$D$ be
an effective divisor on~$X$, and let~$P\in X$, all defined
over~$\Qbar$.  We consider the Zsigmondy set~$\Zcal(X,f,P,D)$ of the
sequence defined by the arithmetic intersection of the $f$-orbit
of~$P$ with~$D$. Under various assumptions on~$X$,~$f$,~$D$, and~$P$,
we use Vojta's conjecture with truncated counting function to prove
that the set of points~$f^n(P)$ with~$n\in\Zcal(X,f,P,D)$ is not
Zariski dense in~$X$.
\end{abstract}


\maketitle

\section*{Introduction}
\label{section:introduction}

Let $\Acal=(a_n)_{n\ge0}$ be a sequence of integers. A prime~$p$ is
called a \emph{primitive prime divisor for~$a_n$} if~$p\mid a_n$, but
$p\nmid a_m$ for all $0\le m<n$ with $a_m\ne0$.\footnote{The classical
  definition does not include the requirement that $a_m\ne0$, but
  including this condition allows for cleaner statements of
  theorems. Note that if some~$a_m=0$, then~$a_m$ is divisible by
  every prime, so if we didn't exclude such~$a_m$, then~$a_n$ with
  $n>m$ would never have a primitive prime divisor.}  The existence of
primitive prime divisors in various types of sequences has been much
studied, both for its intrinsic interest and for numerous
applications. 
\par
The \emph{Zsigmondy set of~$\Acal$} is
\[
  \Zcal(\Acal) = \{n\ge0 : \text{$a_n$ has no primitive prime divisors}\}.
\]
A typical result, due to Zsigmondy~\cite{zsigmondy}, is that if
$u>v>0$ are coprime integers, then the Zsigmondy set of the sequence
$(u^n-v^n)_{n\ge0}$ is finite, and indeed
$\Zcal\bigl(u^n-v^n\bigr)\subseteq\{1,2,6\}$.
\par
In this note we investigate primitive prime divisors in sequences
associated to dynamical systems. 
In this setting, we will use a strong version of a conjecture of Paul
Vojta (Conjecture~\ref{conjecture:vojtatrunc}) to show that certain
Zsigmondy sets are small.  We start by stating a special case of our
main theorem that does not require any technical definitions or
notation.  In the statement, we assume that points in~$\PP^N(\QQ)$ are
always written in normalized form, i.e., as
\[
  [x_0,\ldots,x_N]\quad\text{with}\quad x_0,\ldots,x_N\in\ZZ
  \quad\text{and}\quad \gcd(x_0,\ldots,x_N)=1.
\]
\begin{theorem}
\label{theorem:mainthmoverQ}
Assume that Vojta's conjecture with truncated counting
function~\cite[Conjecture~22.5]{MR2757629} is true for~$\PP^N$.  Let
$f:\PP^N\to\PP^N$ be a morphism of degree~$d\ge3$ defined over~$\QQ$.
Let~$F(X_0,\ldots,X_N)\in\ZZ[X_0,\ldots,X_N]$ be a non-constant
homogeneous polynomial such that the locus $F=0$ in~$\PP^N$ is a
reduced normal-crossings hypersurface. Assume further that
\begin{equation}
  \label{eqn:degFbig}
  \deg(F) > (N+1)\left(1+\frac{1}{d-2}\right).
\end{equation}
Let $P\in\PP^N(\QQ)$, and let
\[
  \Pcal = \bigl( F(f^n(P)) \bigr)_{n\ge0} \subset \ZZ
\]
be the sequence of values of~$F$ on the points in the~$f$-orbit
of~$P$. Then the set 
\[
  \bigl\{f^n(P):n\in\Zcal(\Pcal)\bigr\}
\]
is not Zariski dense in~$\PP^N$.
\end{theorem}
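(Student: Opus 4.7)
The plan is to translate the Zsigmondy hypothesis into an upper bound on the truncated counting function of~$f^n(P)$ against the divisor~$D=(F=0)$, and to combine it with Vojta's conjecture and the $d$-fold growth of heights along the $f$-orbit to trap the relevant orbit points inside a proper Zariski closed subset.

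For $Q\in\PP^N(\QQ)$ written in normalized coordinates, let $N_D^{(1)}(Q)=\sum_{p\mid F(Q)}\log p$. The assumption $n\in\Zcal(\Pcal)$ says that every prime dividing $F(f^n(P))$ already divides $F(f^m(P))$ for some $m<n$ with $a_m\ne 0$, so
\[
  N_D^{(1)}\bigl(f^n(P)\bigr) \;\le\; \sum_{m=0}^{n-1} N_D^{(1)}\bigl(f^m(P)\bigr) \;\le\; \sum_{m=0}^{n-1} h_D\bigl(f^m(P)\bigr) + O(n),
\]
using $N_D\le h_D+O(1)$ (the archimedean proximity contribution is non-negative). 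Let~$\hhat$ be the canonical height of~$f$, so that $\hhat\circ f=d\,\hhat$ and $\hhat=h+O(1)$. Since $h_D=(\deg F)\,h+O(1)$, summing the geometric series gives
\[
  \sum_{m=0}^{n-1}h_D\bigl(f^m(P)\bigr) \;\le\; \frac{\deg F}{d-1}\,h\bigl(f^n(P)\bigr) + O(n),
\]
and the $O(n)$ term is negligible because $h(f^n(P))$ grows like~$d^n$.

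On the other hand, $D$ is a reduced normal-crossings hypersurface and $K_{\PP^N}+D\sim(\deg F-N-1)H$, so Vojta's conjecture with truncated counting function provides, for each $\e>0$, a proper Zariski closed subset $Z=Z(\e)\subsetneq\PP^N$ such that
\[
  (\deg F-N-1)\,h(Q) \;\le\; N_D^{(1)}(Q) + \e\,h(Q) + O(1)
  \quad\text{for all } Q\in\PP^N(\QQ)\setminus Z.
\]
Applying this at $Q=f^n(P)$ with $f^n(P)\notin Z$ and combining with the previous display yields
\[
  (\deg F-N-1)\,h\bigl(f^n(P)\bigr) \;\le\; \Bigl(\frac{\deg F}{d-1}+\e\Bigr)h\bigl(f^n(P)\bigr) + o\bigl(h(f^n(P))\bigr).
\]
Hypothesis~\eqref{eqn:degFbig} rearranges to $\deg F - N - 1 > \deg F/(d-1)$, so choosing~$\e$ strictly smaller than the resulting gap forces $h(f^n(P))=O(1)$. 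By Northcott's theorem, only finitely many such $f^n(P)\in\PP^N(\QQ)$ can exist; hence $\{f^n(P):n\in\Zcal(\Pcal)\}$ lies in the union of~$Z$ and a finite set, and so is not Zariski dense in~$\PP^N$.

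The crux of the argument, and the reason the inequality~\eqref{eqn:degFbig} takes its specific form, is the tension between these two bounds on $N_D^{(1)}(f^n(P))$: Vojta supplies a lower bound of order $(\deg F-N-1)\,h(f^n(P))$, while the Zsigmondy hypothesis supplies an upper bound of order $(\deg F/(d-1))\,h(f^n(P))$, and the two are incompatible precisely when $\deg F > (N+1)(1+1/(d-2))$. Minor nuisances to dispose of include indices with $a_n=0$ (automatically excluded from~$\Zcal$, as any prime not dividing an earlier nonzero~$a_m$ would be primitive) and the preperiodic case $\hhat(P)=0$ (when the orbit is finite and the conclusion is trivial).
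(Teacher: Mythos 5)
Your proof is correct and follows essentially the same strategy as the paper: Vojta's conjecture gives a lower bound of order $(\deg F - N - 1)\,h\bigl(f^n(P)\bigr)$ on the truncated counting function, while the Zsigmondy hypothesis combined with the geometric growth of heights along the orbit gives an upper bound of order $\bigl(\deg F/(d-1)\bigr)\,h\bigl(f^n(P)\bigr)$, and the degree hypothesis~\eqref{eqn:degFbig} makes these incompatible for large $n$ outside the Vojta exceptional set. The paper packages this as showing that a nonnegative ``primitive part'' sum $B_n$ is eventually positive (rather than running the contradiction directly) and works in the greater generality of a polarized dynamical system on a variety with $\rank\NS(X_K)=1$, but the underlying mechanism is identical.
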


Primitive divisors and Zsigmondy sets for dynamically defined
sequences have been studied by many mathematicians.  In
Section~\ref{section:history} we briefly recount some of this history,
which started with the $19^{\text{th}}$-century work of
Bang~\cite{Bang1886} and Zsigmondy~\cite{zsigmondy}.  We mention in
particular a recent paper by Gratton, Nguyen, and
Tucker~\cite{arxiv1208.2989} in which they prove a strengthened
version of Theorem~\ref{theorem:mainthmoverQ} for~$\PP^1$ under the
assumption that the ABC~conjecture of Masser and Oesterl\'e is true.
Their paper, as well as a paper of Yasufuku~\cite{MR2868570} in which
he uses Vojta's conjecture to study integrality of points in orbits,
were the inspirations for the present note, since Vojta's conjecture
(with truncated counting function) is a natural higher-dimensional
analogue of the ABC~conjecture.

In order to describe our general version of
Theorem~\ref{theorem:mainthmoverQ}, we reformulate it geometrically.
The zero-locus of the polynomial~$F$ defines an ample effective
divisor~$D$, and for most primes~$p$, the condition that~$p$
divide~$F\bigl(f^n(P)\bigr)$ is equivalent to the condition that when
we reduce modulo~$p$, we have
\[
  \widetilde{f^n(P)}\bmod p \in \tilde{D}\bmod p,
\]
i.e., the reduction of the point~$f^n(P)$ modulo~$p$ lies on the
reduction of the divisor~$D$ modulo~$p$. So~$p$ is a primitive divisor
of~$F\bigl(f^n(P)\bigr)$ if and only if
\[
  \widetilde{f^n(P)}\bmod p \in \tilde{D}\bmod p
\]
and
\[
  \widetilde{f^m(P)}\bmod p \notin \tilde{D}\bmod p
  \quad\text{for all $1\le m<n$.}
\]
With this formulation, we can define Zsigmondy sets relative to
effective divisors for orbits of points on arbitrary varieties defined
over arbitrary global fields, although there will be a small amount of
ambiguity due to the necessity of choosing an integral model. However,
different models lead to Zsigmondy sets that differ in finitely many
elements, so our finiteness statements will be independent of the
chosen model.

We now state a special case of our main result, using notation that is
described in detail in
Sections~\ref{section:notation},~\ref{section:vojtaconj},
and~\ref{section:ynsystems}.  For a generalization and proof of
Theorem~\ref{theorem:mainthmspcase}, see Theorem~\ref{theorem:mainthm}
and Remark~\ref{remark:mainthmj0} in
Section~\ref{section:primdivsandvojta}.

\begin{theorem}
\label{theorem:mainthmspcase}
Let $(X_K,f_K,H)$ be a polarized dynamical system of degree~$d\ge3$
with~$\rank\NS(X_K)=1$, let~$P\in X_K(K)$ with $\hhat_{f,H}(P)>0$, and
let~$D_K$ be an ample effective reduced normal crossings divisor
on~$X_K$ satisfying
\begin{equation}
  \label{eqn:degKbig}  \deg(D_K) > \frac{d-1}{d-2}\cdot \deg(-\CanDiv_X).
\end{equation}
Further, assume that Vojta's conjecture
\textup(Conjecture~\textup{\ref{conjecture:vojtatrunc})} is true
for the variety~$X_K$ and the divisor~$D_K$. Then
\begin{equation}
  \label{eqn:fnPnZOfP}
  \Bigl\{ f^n(P) : n\in \Zcal\bigl( \Orbit_f(P), D \bigr) \Bigr\}
\end{equation}
is not Zariski dense in~$X_K$. More precisely, except for finitely
many points, the set~\eqref{eqn:fnPnZOfP} is contained in a proper
closed subvariety~$Y\subsetneq X$ that does not depend on~$P$.
\end{theorem}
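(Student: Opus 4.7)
\textbf{The plan} is to apply Vojta's truncated counting conjecture to the orbit $\bigl(f^n(P)\bigr)_{n\ge 0}$ and then use the combinatorial content of the Zsigmondy condition to force the resulting bound on $N_S^{(1)}(D, f^n(P))$ to collapse. Heuristically, Vojta provides a lower bound of order $d^n$ while membership in $\Zcal(\Orbit_f(P), D)$ delivers an upper bound of order $d^n/(d-1)$, so under the numerical hypothesis on $\deg D$ versus $\deg(-\CanDiv_X)$ the two bounds become incompatible for large $n$, forcing $f^n(P)$ into the Vojta exceptional locus $Y$ except for finitely many $n$ in the Zsigmondy set.

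\textbf{Concretely}, using $\rank \NS(X_K) = 1$ I would write $D \equiv \a H$ and $-\CanDiv_X \equiv \b H$ in $\NS(X_K)_\QQ$ with $\a > 0$, translating the degree hypothesis into $\a(d-2)/(d-1) > \b$. The canonical-height identity $\hhat_{f,H}(f^n(P)) = d^n \hhat_{f,H}(P)$, together with $\hhat_{f,H}(P) > 0$, yields asymptotics $h_H(f^n(P)) \sim d^n \hhat_{f,H}(P)$, $h_D(f^n(P)) \sim \a d^n \hhat_{f,H}(P)$, and $h_{\CanDiv_X}(f^n(P)) \sim -\b d^n \hhat_{f,H}(P)$, all with error $o(d^n)$. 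Next, choose $\e > 0$ strictly less than $\a(d-2)/(d-1) - \b$, and invoke Conjecture~\ref{conjecture:vojtatrunc} to obtain a proper Zariski closed $Y \subsetneq X_K$ depending only on $(X_K, D, H, \e)$ such that
\[
h_{\CanDiv_X}(Q) + h_D(Q) \leq N_S^{(1)}(D, Q) + \e \, h_H(Q) + O(1) \quad \text{for all } Q \in X_K(K) \setminus Y.
\]
At $Q = f^n(P) \notin Y$ this yields $N_S^{(1)}(D, f^n(P)) \ge (\a - \b - \e) d^n \hhat_{f,H}(P) + o(d^n)$. On the other hand, if $n \in \Zcal(\Orbit_f(P), D)$, then outside the finitely many bad places collected in $S$, every prime $\gp$ in $\Support(D \cdot f^n(P))$ already lies in some $\Support(D \cdot f^m(P))$ with $m < n$, whence
\[
N_S^{(1)}(D, f^n(P)) \;\leq\; \sum_{m=0}^{n-1} N_S^{(1)}(D, f^m(P)) + O(1) \;\leq\; \a \, \hhat_{f,H}(P) \cdot \frac{d^n-1}{d-1} + o(d^n).
\]
Dividing both estimates by $d^n \hhat_{f,H}(P)$ and letting $n \to \infty$ forces $\a(d-2)/(d-1) \leq \b + \e$, contradicting the choice of $\e$. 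Hence $f^n(P) \in Y$ for all but finitely many $n$ in the Zsigmondy set.

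\textbf{The main obstacle} is the careful bookkeeping required to translate the arithmetic Zsigmondy condition into the Vojta-style truncated counting function $N_S^{(1)}$. One must enlarge $S$ to contain all archimedean places together with the finitely many non-archimedean places of bad reduction of the chosen integral model of $(X, f, D, P)$; for $\gp \notin S$, the assertion that $\gp$ is a primitive divisor of $f^n(P)$ relative to $D$ coincides with $\widetilde{f^n(P)} \bmod \gp \in \widetilde{D} \bmod \gp$ while $\widetilde{f^m(P)} \bmod \gp \notin \widetilde{D} \bmod \gp$ for $m<n$, so the combinatorial Zsigmondy inequality lifts cleanly to $N_S^{(1)}$. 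The finitely many bad places contribute only $O(1)$ to $N_S^{(1)}$ and are absorbed into error terms; similarly, the possibly irregular first few iterates account for the ``finitely many points'' exception in the conclusion.
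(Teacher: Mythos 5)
Your proposal is correct and is essentially the paper's own argument: both rest on Vojta's lower bound for $N_S^{(1)}(D,f^n(P))$ outside the exceptional set $Y$, the upper bound $\sum_{m<n}h_D(f^m(P))\approx\frac{\deg D}{d-1}d^n\hhat_{f,H}(P)$ for the contribution of primes dividing earlier terms, and the degree hypothesis to make these incompatible. The only difference is presentational: the paper bounds from below the sum $B_n$ of local heights at new primes and shows it is eventually positive, while you phrase the same estimates contrapositively as a contradiction for $n$ in the Zsigmondy set.
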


\begin{remark}
We comment briefly on the conditions~\eqref{eqn:degFbig}
and~\eqref{eqn:degKbig} which require~$\deg(F)$ and $\deg(D_K)$ to be
sufficiently large. In some sense, these conditions are necessary,
since for example the theorem is false on~$\PP^1$ with $\deg F=2$
if we take $F(X,Y)=XY$ and $f\bigl([X,Y]\bigr)=[X^d,Y^d]$. The
authors of~\cite{arxiv1208.2989} eliminate this problem on~$\PP^1$ by
replacing~$F$ with~$F\circ f^j$ for some fixed~$j$, thereby increasing
the degree.  This works provided that the function~$F\circ f^j$
acquires enough roots, which will be true unless~$F$ and~$f$ are of a
very special form. In our higher dimensional setting, we can do the
same thing, but we do not have a good description of the ``bad''
$(f,F)$ pairs.  Thus in Theorem~\ref{theorem:mainthmoverQ} we can
replace~\eqref{eqn:degFbig} with the condition that there exists
some~$j\ge0$ such that $\{ F\circ f^j=0\}$ is a reduced normal
crossings hypersurface and~$F$ satisies
\[
  \deg F > \frac{d-1}{d-2}\cdot \frac{N+1}{d^j}.
\]
However, we do not know a nice characterization of the pairs~$(F,f)$
that ensure the existence of such a~$j$.
\end{remark}

In the final two sections we consider related problems and
generalizations.  Section~\ref{section:primdivalggps} discusses the
problem of primitive divisors in algebraic groups, where iteration of
a map may be replaced by the powering map to create sequences that are
denser, and hence less likely to have primitive divisors. In this
situation, even assuming Vojta's conjecture, we explain why we are
unable to prove anything significant about the Zsigmondy set.  Finally,
in Section~\ref{section:primcycles}, we replace points and
divisors, which have dimension~$1$ and codimension~$1$, respectively, with
arbitrary subvarieties that have (arithmetically) complementary
codimensions.  Again, even assuming Vojta's conjecture, we have no
theorems, but we take the opportunity to describe a general framework
and to raise some questions.

\section{A Brief History of Primitive Prime Divisors}
\label{section:history}

As noted earlier, the history of primitive divisors started in the
$19^{\text{th}}$ century with work of Bang~\cite{Bang1886} and
Zsigmondy~\cite{zsigmondy}, who studied the Zsigmondy set
of~$u^n-v^n$. The Bang--Zsigmondy sequences are examples of
\emph{divisibility sequences}, that is, sequences $(a_n)_{n\ge1}$ such
that $a_n\mid a_{nd}$ for all $n,d\ge1$.  They are associated to the
multiplicative group~$\GG_m(\QQ)$.  Lucas sequences are
generalizations attached to twisted multiplicative groups. The study
of primitive divisors in Lucas sequence was completed in~2001 by Bilu,
Hanrot, and Voutier~\cite{MR1863855}, who proved that a Lucas sequence
has primitive divisors for each term with~$n>30$. The existence of
primitive divisors has many applications, for example in the original
proof of Wedderburn's theorem.

Elliptic divisibility sequences (EDS) are divisibility sequences with
$\GG_m$ replaced by an elliptic curve. The properties of EDS were
first studied formally by Ward~\cite{MR0023275}.  The
author~\cite{MR961918} gave an ineffective proof of the existence of
primitive divisors in~EDS, and there are many papers giving
quantitative and/or effective estimates; see for
example~\cite{MR2605536,MR2867920,MR2377368,MR2853048}.
Poonen~\cite{MR2041072} used EDS primitive divisors to resolve certain
cases of Hilbert's $10^{\text{th}}$ problem in number fields.

More generally, one can define divisibility sequences associated to a
non-torsion point~$P$ in any (commutative) algebraic group~$G$. In
Section~\ref{section:primdivalggps} we discuss this general
construction, which was originally described
in~\cite[Section~6]{MR2162351}.

One can also define divisibility sequences in a dynamical
setting.  For example, let $f(x)\in\QQ(x)$ be a rational function,
let~$\a\in\QQ$ be a wandering point, and let~$\b\in\QQ$ be a periodic
point. Then the sequence of numerators of~$f^n(\a)-\b$ satisfies a
strong divisibility property, and subject to appropriate conditions
on~$f$, Ingram and the author~\cite{arXiv:0707.2505} proved that the
Zsigmondy set of this sequence is finite. 
This was applied by Voloch
and the author~\cite{MR2496466} to prove a local-global criterion for
dynamics on~$\PP^1$.

The situation regarding dynamical primitive divisors becomes much more
difficult if the target point~$\b$ is a wandering point, since the
resulting sequence has no obvious divisibility properties. This was
left as an open question in~\cite{arXiv:0707.2505}, and even with the
assumption that the ABC~conjecture is true, it required considerable
ingenuity by Gratton, Nguyen, and Tucker~\cite{arxiv1208.2989} to
prove the existence of primitive divisors for dynamical systems
on~$\PP^1$ with wandering target point. The present paper
replaces~$\PP^1$ with an arbitrary variety and replaces the wandering
target point with a wandering target divisor satisfying some geometric
conditions. And in order to prove anything in this higher dimensional
setting, we require a strong form of Vojta's conjecture in place of
the ABC~conjecture.

\section{Primitive Prime Divisors and Zsigmondy Sets}
\label{section:notation}

We begin by setting some notation that will remain fixed throughout
this article. We start with our basic objects of study.

\begin{notation}
\item[$K$]
a number field or a characteristic~$0$ one-dimensional function field.
\item[$X_K$]
a smooth projective variety defined over~$K$.
\item[$D_K$]
an effective divisor on~$X_K$.
\end{notation}

In order to define primitive prime divisors and Zsigmondy sets, we
first choose an integral model for~$X_K$. 

\begin{notation}
\item[$R$]
a Dedekind domain in~$K$ whose fraction field is~$K$.
\item[$X_R$]
a scheme that is smooth and proper over $\Spec R$ and with generic
fiber~$X_K$, i.e.,  satisfying $X_R\times_RK=X_K$.
\item[$D_R$]
the closure of~$D_K$ in~$X_R$.
\end{notation}

We call~$X_R$ a \emph{model of~$X_K$}.  We note that~$X_K$ always has
a model, since~$X_K$ is smooth and projective over~$K$, so we can
take~$R$ to be the ring of~$S$-integers in~$K$ for a sufficiently
large finite set of places~$S$.  Of course, the variety~$X_K$ has many
different models, but as we will see, the associated Zsigmondy sets
are equal up to finite alteration.

Since~$X_R$ is proper over~$\Spec R$, a point~$Q\in X_K(K)$ defines a
point $Q_R\in X_R(R)$, i.e., the point~$Q$ induces a morphism 
\[
  \s_Q:\Spec R\to X_R.
\]
We write~$|D_K|$ for the support of~$D_K$.  If~$Q\notin|D_K|$, then
$\s_Q^*(D_R)$ is an effective divisor on~$\Spec R$, which is simply a
formal sum of points in~$\Spec{R}$,
\[
  \s_Q^*(D_R) = \sum_{\gp\in\Spec R} n_\gp(Q,D)\gp.
\]

\begin{definition}
The (\emph{global}) \emph{arithmetic intersection ideal} of~$Q$
and~$D$ is the ideal
\begin{equation}
  \label{eqn:globintid}
  (Q\cdot D)_R = \prod_{\gp\in\Spec R} \gp^{n_\gp(Q,D)}.
\end{equation}
\end{definition}

\begin{remark}
In the notation of Theorem~\ref{theorem:mainthmoverQ}, if we let
$D_K=\{F=0\}\in\Div(\PP^N_K)$, then the ideal $(Q\cdot D)_\ZZ$ is
the ideal in~$\ZZ$ generated by the integer~$F(Q)$.
\end{remark}

\begin{definition}
Let~$\Qcal=\{Q_n\}_{n\ge0}$ be a sequence of points in
$X(K)\setminus|D_K|$.  We say that~$\gp\in\Spec R$ is a
\emph{primitive \textup(prime\textup) divisor of~$Q_n$ relative to~$D$} if
\[
  \gp \mid (Q_n\cdot D)_R
  \qquad\text{and}\qquad
  \gp \nmid (Q_m\cdot D)_R
  \quad\text{for all $0\le m<n$.}
\]
The \emph{Zsigmondy set of~$\Qcal$ relative to~$D$} is
\[
  \Zcal(\Qcal,D) 
  = \{n\ge0 : \text{$Q_n$ has no primitive divisors relative to $D$}\}.
\]
\end{definition}

The definition of primitive divisors and the Zsigmondy set clearly
depend on our choice of the model~$X_R$. We now show
that~$\Zcal(\Qcal,D)$ is well-defined up to alteration by finitely
many elements.  This shows in particular that the statement of
Theorem~\ref{theorem:mainthm}, which is our main result, is independent
of the choice of a model for~$X_K$.

\begin{proposition}
\label{proposition:Zmodelindep}
Let~$\Zcal=\Zcal(\Qcal,D)$ and~$\Zcal'=\Zcal'(\Qcal,D)$ be Zsigmondy
sets of~$\Qcal$ relative to~$D$ that are defined using models~$X_R$
and~$X'_R$ of~$X_K$, say associated to rings~$R$ and~$R'$, respectively.
Then the set difference
\begin{equation}
  \label{eqn:ZcupsetminusZcap}
  (\Zcal\cup\Zcal') \setminus (\Zcal\cap\Zcal')
  \quad\text{is finite.} 
\end{equation}
\end{proposition}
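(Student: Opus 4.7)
The plan is to show that the symmetric difference is finite by first spreading out to a common refinement of the two models and then exploiting the elementary fact that a fixed prime can serve as a primitive divisor for at most one index of the sequence.

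I would begin by producing a common model. After inverting finitely many primes in each of~$R$ and~$R'$, one obtains a Dedekind domain~$R''\subseteq K$ (still with fraction field~$K$) that contains both~$R$ and~$R'$ and over which the two base changes $X_R\times_R R''$ and $X'_{R'}\times_{R'}R''$ become isomorphic by an isomorphism carrying the closure of~$D_K$ in one to its closure in the other. This is a standard spreading-out step: the identity on the generic fiber, its inverse, and compatibility with~$D_K$ are described by finitely many polynomial relations, and these extend to an $R''$-level isomorphism after inverting finitely many additional primes. Let $T\subset\Spec R$ and $T'\subset\Spec R'$ denote the finite sets of primes inverted in this process; then $\Spec R''$ is in bijection with $\Spec R\setminus T$ and with $\Spec R'\setminus T'$.

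The key observation is then that for any prime~$\gp''\in\Spec R''$ corresponding to $\gp\in\Spec R\setminus T$ and $\gp'\in\Spec R'\setminus T'$, the local intersection multiplicities $n_\gp(Q_n,D)$ and $n_{\gp'}(Q_n,D)$ are equal, since both are computed from the pullback of the same divisor by the same section in the common $R''$-model. It follows that~$\gp$ is a primitive divisor of~$Q_n$ relative to~$D$ in the $R$-model if and only if~$\gp'$ is a primitive divisor of~$Q_n$ in the $R'$-model.

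Now suppose $n\in\Zcal'\setminus\Zcal$: some prime~$\gp$ is a primitive divisor of~$Q_n$ in the $R$-model while $Q_n$ has no primitive divisor in the $R'$-model, so by the observation above~$\gp$ must lie in the exceptional set~$T$. However, the very definition of ``primitive'' implies that each prime can serve as a primitive divisor for at most one index: once $\gp\mid(Q_n\cdot D)_R$, it cannot be primitive for any~$Q_m$ with $m>n$. Hence $|\Zcal'\setminus\Zcal|\le|T|$, and by the symmetric argument $|\Zcal\setminus\Zcal'|\le|T'|$, which gives~\eqref{eqn:ZcupsetminusZcap}. The only piece that requires care is the spreading-out step, but this is a routine finite-presentation argument; the finiteness of the symmetric difference then follows immediately from the pigeonhole-type observation that a finite set of bad primes can disrupt the primitivity condition at only finitely many indices.
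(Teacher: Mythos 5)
Your proof is correct and follows essentially the same route as the paper: pass to a common model over a ring $R''$ obtained by discarding finitely many primes, observe that primitivity transfers at the common primes, and use that each prime can be a primitive divisor for at most one index. The only cosmetic difference is that you bound $|\Zcal'\setminus\Zcal|$ and $|\Zcal\setminus\Zcal'|$ directly by the sizes of the discarded prime sets, whereas the paper compares both $\Zcal$ and $\Zcal'$ to the Zsigmondy set of the common model and shows all three agree from some index on.
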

\begin{proof}
Each point in~$\Spec R$ corresponds to a distinct valuation of~$K$,
and all but finitely many (equivalence classes) of valuations of~$K$ come
from $\Spec R$. The same is true of~$\Spec R'$, so we can find
a model~$X_R''$ for~$X_K$ over a ring~$R''$ such that
\[
  \begin{array}{ccc}
    X_R'' & \longhookrightarrow & X_R \\
    \Big\downarrow && \Big\downarrow  \\
    \Spec(R'') & \longhookrightarrow & \Spec(R) \\
  \end{array}
  \quad\text{and}\quad
  \begin{array}{ccc}
    X_R'' & \longhookrightarrow & X_R' \\
    \Big\downarrow && \Big\downarrow  \\
    \Spec(R'') & \longhookrightarrow & \Spec(R') \\
  \end{array}.
\]
\par
By definition, for each non-negative integer $n\notin\Zcal$, we can
find a prime~$\gp\in\Spec R$ that is a primitive divisor of~$Q_n$.
Since each prime in~$\Spec R$ is a primitive divisor for at most
one~$Q_n$, and since $\Spec(R)\setminus\Spec(R'')$ is finite, we can
find an~$N$ so that for every $n\ge N$ with $n\notin\Zcal$, there is a
prime~$\gp\in\Spec(R'')$ that is a primitive divisor of~$Q_n$.
\par
Similarly, we can find an~$N'$ so that for every $n\ge N'$ with
$n\notin\Zcal'$, there is a prime~$\gp\in\Spec(R'')$ that is a
primitive divisor of~$Q_n$. So if we let~$\Zcal''$ be the Zsigmondy
set of~$\Qcal$ relative to~$D$ using the model~$X_R''$
and if we set $N''=\max\{N,N'\}$, then we have proven that
\[
  \{n\in \Zcal'' : n\ge N''\}
  =   \{n\in \Zcal' : n\ge N''\}
  =   \{n\in \Zcal : n\ge N''\}.
\]
This gives~\eqref{eqn:ZcupsetminusZcap}, which completes the proof of
Proposition~\ref{proposition:Zmodelindep}.
\end{proof}

\section{Height Functions and Vojta's Conjecture}
\label{section:vojtaconj}

In this section we discuss height functions and set the notation
required to state Vojta's conjecture. 

\begin{notation}
\item[$M_K$]
a complete set of inequivalent absolute values on~$K$. We
write~$M_K^0$, respectively~$M_K^\infty$, for the set of
non-archimedean, respectively archimedean, absolute values in~$M_K$.
\item[$N_\gp$]
the number of elements in the residue field associated to a place
$\gp\in M_K^0$.
\item[$h_D$]
a Weil height on~$X_K$ for the divisor~$D$
relative to the field~$K$.
\item[$\l_{D,v}$]
a $v$-adic local height on~$(X\setminus|D|)(K_v)$ for the divisor~$D$,
relative to the field~$K_v$. If~$D$ is effective and~$v\in M_K^0$, we
assume that~$\l_{D,v}$ is chosen to be a non-negative function.
\end{notation}

\begin{remark}
\label{remark:deflDgpQ}
Having fixed a model~$X_R$, we can use intersection theory to specify a
local height~$\l_{D,\gp}$ for $\gp\in\Spec R\subset M_K^0$ via the formula
\begin{equation}
  \label{eqn:lDpQordp}
  \l_{D,\gp}(Q) = \ord_\gp (Q\cdot D)_R \cdot \log N_\gp,
\end{equation}
where we recall that~$(Q\cdot D)_R$ is the arithmetic intersection
ideal~\eqref{eqn:globintid}.  (Cf.~\cite[Chapter 11, Section
  5]{lang:diophantinegeometry}.)  We will assume henceforth that
for~$\gp\in\Spec R$, the local height~$\l_{D,\gp}$ has been chosen in
this way.
\end{remark}

We further assume that our absolute values and local and global
heights are chosen to satisfy
\[
  h_D(P) = \sum_{v\in M_K} \l_{D,v}(P)
  \quad\text{for all $P\in X_K(K)\setminus|D|$.}
\]
For basic material on height functions and their normalizations, see
for example~\cite[\S\S B.1--B.3]{hindrysilverman:diophantinegeometry}
or~\cite[Chapters~3 and~4]{lang:diophantinegeometry}.

\begin{definition}
An effective divisor~$D\in\Div(X)$ is a \emph{normal crossings
  divisor} if $D=\sum_{i=1}^r n_iD_i$, where the~$D_i$ are irreducible
subvarieties and the variety $\bigcup_{i=1}^r|D_i|$ has normal
crossings.  If all of the~$n_i=1$, then~$D$ is a \emph{reduced
  divisor}.  We write~$D^\red=\sum_{i=1}^rD_i$ for the reduced divisor
associated to~$D$.
\end{definition}

Vojta's original conjectures~\cite{MR883451} used arithmetic analogues
of the proximity and counting functions of classical Nevanlina theory.
Later, Vojta took stronger results in Nevanlinna theory
involving truncated counting functions and transplanted them into an
arithmetic setting. This led to arithmetic conjectures that are
natural generalizations of the ABC-conjecture of Masser and
Oesterl\'e~\cite{MR992208} and of Szpiro's
conjecture~\cite{MR1065151}.

\begin{definition}
Let $S\subset M_K$ be a finite set of places containing $M_K^\infty$.
The (\emph{arithmetic}) \emph{truncated counting function} is
\[
  N_S^{(1)}(D,P)
  = \sum_{\gp\notin S} \min\bigl\{\l_{D,\gp}(P),\log N_\gp \bigr\}
  = \sum_{\gp\notin S,\; n_\gp(P,D)\ge1}\log N_\gp.
\]
It is well-defined for $P\in X(K)\setminus|D|$.
\end{definition}

\begin{remark}
It is clear from the definition that
\[
  N_S^{(1)}(D,P)=N_S^{(1)}(D^\red,P),
\]
since $\l_{D,\gp}(P)=0$ if and only if $\l_{D^\red,\gp}(P)=0$.
\end{remark}

\begin{conjecture}[Vojta's conjecture with truncated counting 
function {\cite[Conjecture~22.5]{MR2757629}}]
\label{conjecture:vojtatrunc}
Let $S\subset M_K$ be a finite set
of places containing~$M_K^\infty$, let~$D$ be a normal crossings
divisor on~$X$, let~$\CanDiv_X$ be a canonical divisor on~$X$, and
let~$H$ be an ample line bundle on~$X$.  All heights are taken
relative to the number field~$K$.
\par
For all $\e>0$ there is a proper Zariski closed set
\[
  Y=Y(X,D,H,\e)\subset X
\]
such that for all~$C\ge0$, the inequality
\[
  N_S^{(1)}(D,P)
  \ge h_{\CanDiv_X+D}(P) - \e h_H(P) - C
\]
holds for all but finitely many $P\in (X\setminus Y)(K)$
\end{conjecture}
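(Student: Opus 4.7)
Writing a proposal for proving Conjecture~\ref{conjecture:vojtatrunc} is necessarily speculative, since this is a central open problem: already the case $X=\PP^1$ with $D=\{0,1,\infty\}$ implies the ABC conjecture of Masser--Oesterl\'e. Any serious attack must build on the deep analogy between arithmetic and Nevanlinna theory, in which the corresponding statement---the Second Main Theorem with truncated counting function---has been proved in many cases (Ahlfors, Cartan, Griffiths, Ru, Yamanoi, and others, and for holomorphic curves into semiabelian varieties by Noguchi--Winkelmann--Yamanoi). The plan is to translate their analytic arguments into arithmetic, while acknowledging that each translated step is itself a substantial unsolved problem.

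First I would concentrate on the function-field case in characteristic zero, where geometric intuition is sharpest and partial results exist. There one can hope to adapt holomorphic-curve arguments directly, using the theory of jet differentials and Kodaira-dimension estimates to bound contact orders of the orbit with the normal-crossings divisor, and passing from Nevanlinna characteristic functions to heights via the Lang--Vojta dictionary. For the number field case the natural template is Vojta's own proof of the Mordell conjecture: construct an auxiliary global section of an ample line bundle on a product $X^n$, vanishing to high order along a suitably chosen subvariety; bound its Arakelov-theoretic size via arithmetic Riemann--Roch (Faltings, Gillet--Soul\'e); and extract a contradiction from a Thue--Siegel-style Product Theorem. The truncation would be encoded by forcing the auxiliary section to distinguish $D^\red$ from $D$, so that non-reduced contact contributes no extra weight.

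The principal obstacle is exactly this truncation. The non-truncated Vojta conjecture bounds a sum of local heights counted with multiplicity, which is amenable to intersection-theoretic integration by parts; the truncated version detects merely the existence of an intersection and discards multiplicity data, thereby demanding genuine savings at ramified primes. In Nevanlinna theory this saving is supplied by Ahlfors' negative-curvature lemma together with the logarithmic derivative lemma controlling the proximity $m(r,f'/f)$. The hardest single step in my plan is constructing an arithmetic analogue of this logarithmic derivative lemma: any approach that avoids it will at best reduce Conjecture~\ref{conjecture:vojtatrunc} to ABC---the special case $N=1$---which itself remains wide open. I would expect the outcome of any honest attempt to be, at most, a conditional reduction of the higher-dimensional truncated statement to a uniform quantitative form of ABC, rather than an unconditional proof.
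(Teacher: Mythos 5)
This statement is not proved in the paper, and it cannot be compared against a ``paper's own proof'': it is Vojta's conjecture with truncated counting function, quoted verbatim from \cite[Conjecture~22.5]{MR2757629}, and it enters the paper purely as a hypothesis --- Theorems~\ref{theorem:mainthmoverQ}, \ref{theorem:mainthmspcase}, and~\ref{theorem:mainthm} are all conditional on it, just as the Gratton--Nguyen--Tucker result on $\PP^1$ is conditional on ABC. The paper's stance is simply to assume the conjecture, and that is the only defensible stance: the statement is a central open problem, strictly stronger than the ABC conjecture of Masser--Oesterl\'e (already the case $X=\PP^1$ with $D=(0)+(1)+(\infty)$ implies it), so any complete ``proof'' submitted here would have to be wrong.

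Your proposal is candid about this, and as a survey of the landscape it is reasonable: the analogy with the Second Main Theorem with truncated counting functions in Nevanlinna theory, the relative tractability of the function-field case, the template of Vojta's arithmetic Thue--Siegel machinery, and the identification of the missing arithmetic analogue of the logarithmic derivative lemma as the crux are all accurate points. But none of these steps is carried out, and by your own admission the best plausible outcome of the plan is a conditional reduction to a quantitative ABC statement, which is itself open. So what you have written is a research program, not a proof, and it establishes nothing about the statement. For the purposes of this paper that is not a defect to be repaired by more work on your part: the correct move --- and the one the paper makes --- is to treat Conjecture~\ref{conjecture:vojtatrunc} as an assumption and prove the dynamical Zsigmondy results conditionally on it.
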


\section{Dynamical Systems}
\label{section:ynsystems}

We study the dynamical system given by iteration of the $K$-mor\-phism
\[
  f_K : X_K \longrightarrow X_K.
\]
We denote the~$f_K$-orbit of a point~$P\in X_K(K)$ by
\[
  \Orbit_f(P)
  = \bigl\{P,f_K(P), f_K^2(P),f_K^3(P),\ldots\bigr\}.
\]
\par
For the remainder of this article, we assume without further comment
that the Dedekind domain~$R$ and model~$X_R$ have been chosen so
that~$f_K$ extends to an~$R$-scheme morphism
\[
  f_R : X_R \longrightarrow X_R.
\]
We call $(X_R,f_R)$ a model for $(X_K,f_K)$.  Having normalized our
local heights using intersection theory on~$X_R$ via
formula~\eqref{eqn:lDpQordp}, we see that they behave functorially,
\begin{equation}
  \label{eqn:lDpfPeqlfDpP}
  \l_{D,\gp}\bigl(f_K(P)\bigr) = \l_{f_K^*D,\gp}(P).
\end{equation}
This follows from the projection formula
\[
  (f_K(Q),D)_R = (Q,f_K^*D)_R,
\]
which is the arithmetic analogue of~\cite[Appendix~A,
  Formula~A4]{hartshorne}.
\par
Using the projection formula~\eqref{eqn:lDpfPeqlfDpP} in the
definition of the truncated counting function yields
\[
  N_S^{(1)}\bigl(D,f_K(P)\bigr)=N_S^{(1)}(f_K^*D,P).
\]
More generally, for any~$j\ge0$ we have
\begin{equation}
  \label{eqn:NDfjPeqNfjDredP}
  N_S^{(1)}\bigl(D,f_K^j(P)\bigr)
  = N_S^{(1)}\bigl(((f_K^j)^*D)^\red,P\bigr).
\end{equation}

\begin{definition}
A \emph{polarized dynamical system} (\emph{defined over~$K$}) is a
triple $(X_K,f_K,H)$ consisting of a smooth projective variety~$X_K$, a
$K$-mor\-phism \text{$f_K:X_K\to X_K$}, and an ample
divisor~$H\in\Div(X_K)\otimes\RR$ with the property that
\[
  f_K^*H \sim d H
  \quad\text{for some $d>1$.}
\]
Here~$\sim$ indicates linear equivalence in~$\Pic(X)\otimes\RR$.
We call~$d$ the \emph{degree of the polarized dynamical system}.
\end{definition}

\begin{example}
Let~$f_K:\PP_K^N\to\PP_K^N$ be a morphism of degree at least two, and
let~$H$ be any ample divisor on~$\PP_K^N$. Then~$(\PP_K^N,f_K,H)$ is a
polarized dynamical system of degree equal to~$\deg(f)$.
\end{example}

The following well-known result summarizes the theory of canonical
heights for polarized dynamical systems.

\begin{theorem}
\label{theorem:canht}
Let $(X_K,f_K,H)$ be a polarized dynamical system of degree~$d$.
Then for all $P\in X(K)$, the limit
\[
  \hhat_{f,H}(P) = \lim_{n\to\infty} d^{-n}h_H\bigl(f_K^n(P)\bigr)
\]
exists and has the following properties\textup:
\begin{parts}
\Part{(a)}
$\hhat_{f,H}(P) = h_H(P) + O(1)$, where the~$O(1)$ depends
on~$X_K$,~$f_K$, and the choice of height function~$h_H$, but is
independent of~$P$.
\Part{(b)}
$\hhat_{f,H}\bigl(f_K(P)\bigr)=d\hhat_{f,H}(P)$.
\Part{(c)}
If~$K$ is a number field, then $\hhat_{f,H}(P)=0$ if and only if~$P$
is preperiodic, i.e., has finite forward orbit.
\end{parts}
\end{theorem}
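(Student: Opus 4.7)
The plan is to follow the standard Tate telescoping argument, which rests on a single key input: functoriality of Weil heights together with the relation $f_K^*H\sim dH$ yield a constant $C=C(X_K,f_K,H)$ such that
\[
  \bigl| h_H\bigl(f_K(Q)\bigr) - d\,h_H(Q)\bigr| \le C
  \quad\text{for all $Q\in X(\Kbar)$.}
\]
This inequality is the engine for everything else.

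First I would prove that the limit defining $\hhat_{f,H}(P)$ exists by showing the sequence $g_n(P)=d^{-n}h_H\bigl(f_K^n(P)\bigr)$ is Cauchy. Applying the basic inequality to $Q=f_K^n(P)$ and dividing by $d^{n+1}$ gives
\[
  \bigl| g_{n+1}(P) - g_n(P) \bigr| \le C d^{-n-1},
\]
so $(g_n(P))$ is Cauchy with a rate independent of $P$, and hence converges to some real number which we define to be $\hhat_{f,H}(P)$.

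For part (a), I would sum the telescoping bound from $n=0$ to $\infty$:
\[
  \bigl| \hhat_{f,H}(P) - h_H(P) \bigr|
  = \Bigl| \sum_{n=0}^\infty \bigl(g_{n+1}(P)-g_n(P)\bigr)\Bigr|
  \le \sum_{n=0}^\infty C d^{-n-1} = \frac{C}{d-1},
\]
which is an $O(1)$ bound depending only on $X_K$, $f_K$, and $h_H$. For part (b), I would use the definition directly: reindexing the limit,
\[
  \hhat_{f,H}\bigl(f_K(P)\bigr)
  = \lim_{n\to\infty} d^{-n} h_H\bigl(f_K^{n+1}(P)\bigr)
  = d \lim_{n\to\infty} d^{-(n+1)} h_H\bigl(f_K^{n+1}(P)\bigr)
  = d\,\hhat_{f,H}(P).
\]

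For part (c), assume $K$ is a number field. If $P$ is preperiodic, its forward orbit is finite, so $h_H\bigl(f_K^n(P)\bigr)$ is bounded, forcing $\hhat_{f,H}(P)=0$. Conversely, if $\hhat_{f,H}(P)=0$, then by (b) we have $\hhat_{f,H}\bigl(f_K^n(P)\bigr)=0$ for every $n$, so by (a) the quantity $h_H\bigl(f_K^n(P)\bigr)$ is uniformly bounded. Since $H$ is ample and all iterates lie in the fixed number field $K$, Northcott's finiteness theorem implies the set $\{f_K^n(P):n\ge 0\}$ is finite, i.e. $P$ is preperiodic. The only genuine subtlety is the use of Northcott in (c), which is precisely where the number field hypothesis enters; parts (a) and (b) are purely formal consequences of the telescoping argument and require no obstacle to overcome.
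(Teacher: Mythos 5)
Your proof is correct and is precisely the standard Tate telescoping argument (plus Northcott for part (c)) that the paper's cited references (Call--Silverman and \cite[\S3.4]{MR2316407}) use; the paper itself gives no proof beyond those citations. The only unremarked technicality is that $H$ is an ample $\RR$-divisor, but the height machine and Northcott's theorem extend to this setting in the usual way, so nothing is lost.
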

\begin{proof}
See for example~\cite{callsilv:htonvariety}
or~\cite[\S3.4]{MR2316407}.  We note that~(c) is also true over
function fields if~$f_K$ is not isotrivial, but the proof is much more
difficult; see~\cite{arxiv0601046,arxiv0510444}.
\end{proof}

\begin{definition}
If the N\'eron-Severi group~$\NS(X_K)$ of the variety~$X_K$ has
rank~$1$, then we fix a \emph{degree map}
\[
  \deg:\NS(X_K)\otimes\RR \xrightarrow{\;\sim\;}\RR
\]
with the property that $\deg(H)>0$ for ample divisors~$H$.  The degree
map is well-defined up to multiplication by a positive constant.
\end{definition}

Height functions have many functorial properties; see
for example~\cite[Theorem~B.3.2]{hindrysilverman:diophantinegeometry}.
We will need the following special case of one of these properties.

\begin{proposition}
\label{proposition:htforalgeq}
Assume that~$\NS(X_K)$ has rank~$1$, let~$H\in\Div(X_K)$ be
an ample divisor, and let~$D\in\Div(X_K)$ be an arbitrary divisor.
Then for every~$\e>0$ we have
\[
  \bigl|(\deg H)h_D-(\deg D)h_H\bigr| \le \e h_H + O(1).
\]
\end{proposition}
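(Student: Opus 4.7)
The plan is to reduce the proposition to the standard fact that a divisor class that vanishes in $\NS(X_K)\otimes\RR$ has height growing slower than any positive multiple of $h_H$. Define
\[
  E = (\deg H)D - (\deg D)H \in \Div(X_K)\otimes\RR.
\]
By linearity of the height machine,
\[
  h_E(P) = (\deg H)h_D(P) - (\deg D)h_H(P) + O(1),
\]
so the proposition is equivalent to the bound $|h_E(P)| \le \e h_H(P) + O(1)$.

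First I would verify that $[E] = 0$ in $\NS(X_K)\otimes\RR$. Computing directly,
\[
  \deg(E) = (\deg H)(\deg D) - (\deg D)(\deg H) = 0,
\]
and since $\NS(X_K)\otimes\RR$ is one-dimensional with $\deg$ a nonzero linear functional on it, $\deg$ is an isomorphism on this space, so $[E] = 0$. Clearing denominators, some positive integer multiple of $E$ is an integral divisor class of finite order in $\NS(X_K)$, and hence a further integer multiple is numerically equivalent to zero.

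Next I would appeal to the standard height-machine estimate for numerically trivial divisors: for any $E'$ with $[E'] = 0$ in $\NS(X_K)\otimes\RR$, any ample $H$, and any $\e > 0$, one has $|h_{E'}| \le \e h_H + O(1)$ (see for instance \cite[Theorem~B.3.2(g) and Proposition~B.5.3]{hindrysilverman:diophantinegeometry}, or \cite[Chapter~4, \S5]{lang:diophantinegeometry}). Applying this estimate to a sufficiently large integer multiple of $E$ and rescaling $\e$ accordingly yields the bound for $E$ itself, which combined with the displayed linearity identity completes the proof.

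The only substantive step is the cited numerically-trivial estimate, which ultimately rests on the observation that, after passing to an integer multiple, both $\delta H + E'$ and $\delta H - E'$ lie in the ample cone for any $\delta>0$, so their heights are each bounded below by $-O(1)$; the two resulting one-sided inequalities combine to give the two-sided bound on $h_{E'}$. I do not anticipate any genuine obstacle beyond assembling these classical ingredients in the rank-one setting.
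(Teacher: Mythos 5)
Your proposal is correct and follows essentially the same route as the paper: you form the same divisor $E=(\deg H)D-(\deg D)H$, note that it has degree zero and hence is trivial in $\NS(X_K)\otimes\RR$ because the rank is one, and then invoke the standard height-machine estimate (Hindry--Silverman Theorem~B.3.2) that a divisor algebraically equivalent to zero satisfies $h_E=o(h_H)$, i.e.\ $|h_E|\le\e h_H+O(1)$. The extra details you supply (linearity of heights, clearing denominators, and the sketch of the ample-cone argument behind the cited estimate) are harmless elaborations of the same argument.
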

\begin{proof}
The assumption on~$\NS(X_K)$ implies that the divisor class of $(\deg
H)D$ is algebraically equivalent to the divisor class of $(\deg D)H$
in~$\NS(X)\otimes\RR$, since they both have the same degree. We let
$E=(\deg H)D-(\deg D)H$ and
apply~\cite[Theorem~B.3.2]{hindrysilverman:diophantinegeometry}(f),
which says that since~$E$ is algebraically equivalent to~$0$, we have
$h_E=o(h_H)$.
\end{proof}

\section{Primitive Divisors in Dynamical Systems and Vojta's Conjecture}
\label{section:primdivsandvojta}

In this section we use Vojta's conjecture to prove our main theorem,
which says that certain dynamically defined Zsigmondy sets are small.

\begin{theorem}
\label{theorem:mainthm}
Let $(X_K,f_K,H)$ be a polarized dynamical system of degree~$d$, and
let~$D_K$ be an ample effective divisor on~$X_K$. Make the following
assumptions:
\begin{parts}
\Part{\textbullet}
$\NS(X_K)$ has rank~$1$.
\Part{\textbullet}
$P\in X(K)$ is a point with $\hhat_{f,H}(P)>0$.
\Part{\textbullet}
There is an integer~$j\ge0$ such that the divisor
\[
  \D_j :=  \bigl((f^j)^*D\bigr)^\red
\]
is a normal crossings divisor whose degree satisfies
\begin{equation}
  \label{eqn:degDjdjgt}
  \frac{\deg \D_j}{d^j} > \frac{\deg D}{d-1} + \frac{\deg(-\CanDiv_X)}{d^j}.
\end{equation}
\Part{\textbullet}
Vojta's conjecture
\textup(Conjecture~\textup{\ref{conjecture:vojtatrunc})} is true for
the variety~$X$ and the divisor $\bigl((f^j)^*D\bigr)^\red$.
We let~$Y_{f,D,j}\subsetneq X$ denote the exceptional set in
Vojta's conjecture.
\end{parts}
Then the Zsigmondy set of the $f$-orbit of~$P$ relative to~$D$ has
the property that
\[
  \Bigl\{ f^n(P) : n \in \Zcal\bigl( \Orbit_f(P), D ) \Bigr\}
  \subset Y_{f,D} \cup \{\text{finite set}\}.
\]
\end{theorem}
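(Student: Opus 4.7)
The plan is to apply Vojta's conjecture to the normal-crossings divisor $\D_j=\bigl((f^j)^*D\bigr)^\red$ at the shifted orbit points $f^{n-j}(P)$, using the projection-formula identity~\eqref{eqn:NDfjPeqNfjDredP} to translate the resulting lower bound on $N_S^{(1)}(\D_j,f^{n-j}(P))$ into a lower bound on $N_S^{(1)}(D,f^n(P))$. Paired with a simple upper bound on the latter quantity extracted directly from the hypothesis $n\in\Zcal(\Orbit_f(P),D)$, this produces---after division by $d^n$ and passage to the limit $n\to\infty$---a numerical inequality that contradicts~\eqref{eqn:degDjdjgt} unless only finitely many such $n$ occur outside of $Y_{f,D,j}$.

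For the upper bound, suppose $n\in\Zcal(\Orbit_f(P),D)$. By definition, every prime $\gp$ dividing $(f^n(P)\cdot D)_R$ also divides $(f^m(P)\cdot D)_R$ for some $0\le m<n$, and so contributes $\log N_\gp$ to $N_S^{(1)}(D,f^m(P))$. Summing, together with the standard bound $N_S^{(1)}(D,Q)\le h_D(Q)+O(1)$ for effective $D$, yields
\[
N_S^{(1)}\bigl(D,f^n(P)\bigr)\le\sum_{m=0}^{n-1}N_S^{(1)}\bigl(D,f^m(P)\bigr)\le\sum_{m=0}^{n-1}h_D\bigl(f^m(P)\bigr)+O(n).
\]
Now Proposition~\ref{proposition:htforalgeq} (which uses $\rank\NS(X_K)=1$) replaces each $h_D(f^m(P))$ by $(\deg D/\deg H)\,h_H(f^m(P))+\e\,h_H(f^m(P))+O(1)$, and Theorem~\ref{theorem:canht} replaces $h_H(f^m(P))$ by $d^m\hhat_{f,H}(P)+O(1)$. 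The resulting geometric sum collapses to
\[
N_S^{(1)}\bigl(D,f^n(P)\bigr)\le\left(\frac{\deg D}{\deg H}+\e\right)\frac{d^n}{d-1}\hhat_{f,H}(P)+O(n).
\]

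For the lower bound, I would use~\eqref{eqn:NDfjPeqNfjDredP} to rewrite $N_S^{(1)}(D,f^n(P))=N_S^{(1)}(\D_j,f^{n-j}(P))$ for $n\ge j$, then invoke Vojta's conjecture applied to $\D_j$: provided $f^{n-j}(P)\notin Y_{f,D,j}$, and with at most finitely many additional exceptions,
\[
N_S^{(1)}\bigl(\D_j,f^{n-j}(P)\bigr)\ge h_{\CanDiv_X+\D_j}\bigl(f^{n-j}(P)\bigr)-\e\,h_H\bigl(f^{n-j}(P)\bigr)-C.
\]
Converting the right-hand side via Proposition~\ref{proposition:htforalgeq} and Theorem~\ref{theorem:canht} gives
\[
N_S^{(1)}\bigl(D,f^n(P)\bigr)\ge\left(\frac{\deg(\CanDiv_X+\D_j)}{\deg H}-\e'\right)d^{n-j}\hhat_{f,H}(P)-O(1).
\]
Comparing with the upper bound, dividing through by $d^n\hhat_{f,H}(P)/\deg H$ (legitimate since $\hhat_{f,H}(P)>0$), and letting $n\to\infty$ to kill the $O(n)$ and $O(1)$ errors, one is left with
\[
\frac{\deg(\CanDiv_X+\D_j)}{d^j}\le\frac{\deg D}{d-1}+\e'',
\]
where $\e''$ can be made arbitrarily small by choosing the various $\e$-parameters small relative to the gap in~\eqref{eqn:degDjdjgt}. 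This contradicts~\eqref{eqn:degDjdjgt}, so only finitely many $n\in\Zcal(\Orbit_f(P),D)$ satisfy $f^{n-j}(P)\notin Y_{f,D,j}$; for the remaining $n$, one has $f^n(P)\in f^j(Y_{f,D,j})$, a proper closed subvariety of $X$ which serves as (a subset of) $Y_{f,D}$.

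The main obstacle will be the error-term bookkeeping: three independent small parameters enter (two uses of Proposition~\ref{proposition:htforalgeq} and the Vojta $\e$), along with $O(n)$ losses from geometric summation and $O(1)$ losses from canonical-height approximation, and one must verify that after dividing by $d^n$ each of these is asymptotically negligible compared to the leading $\hhat_{f,H}(P)$ term. A secondary technical point concerns the precise identification of $Y_{f,D}$: the Vojta exceptional locus $Y_{f,D,j}$ must be pushed forward under the morphism $f^j$ to a proper closed subvariety $f^j(Y_{f,D,j})$, while the finitely many Vojta-excluded points---together with any orbit points that land on $|D|$, where the Zsigmondy machinery is not defined---are to be absorbed into the ``finite set'' part of the conclusion.
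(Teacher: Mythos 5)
Your proposal is correct and follows essentially the same path as the paper. Both arguments hinge on the same three ingredients: the translation $N_S^{(1)}(D,f^n(P))=N_S^{(1)}(\D_j,f^{n-j}(P))$ coming from the projection formula~\eqref{eqn:NDfjPeqNfjDredP}, the Vojta lower bound for $\D_j$, and the geometric-series summation of $h_D$ along the orbit via Proposition~\ref{proposition:htforalgeq} and Theorem~\ref{theorem:canht}.

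The only organizational difference is cosmetic. The paper introduces the auxiliary quantity $B_n$ (the sum of local heights at primes that did not appear at any earlier index) and shows $B_n>0$ for large $n$ outside the exceptional set, whence a primitive prime exists. You instead take the contrapositive from the start, assume $n\in\Zcal$, deduce directly that
\[
N_S^{(1)}\bigl(D,f^n(P)\bigr)\le\sum_{m=0}^{n-1}N_S^{(1)}\bigl(D,f^m(P)\bigr)\le\sum_{m=0}^{n-1}h_D\bigl(f^m(P)\bigr)+O(n),
\]
and then contradict Vojta's lower bound after dividing by $d^n\hhat_{f,H}(P)$. That first inequality (bounding the truncated count of $f^n(P)$ by the sum of the earlier truncated counts) is a slightly tidier way to express what the paper's bound $B_n\ge N_S^{(1)}(D,f^nP)-\sum h_D-O(n)$ is doing, and your $O(n)$ bookkeeping is actually more careful than the paper's (the paper writes $O(1)$ at the step where the archimedean local-height lower bound is summed $n$ times, though this is harmless since it is later dominated). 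You are also right to point out that the Vojta hypothesis is applied at $f^{n-j}(P)$, so the exceptional points land in $f^j(Y_{f,D,j})$ (still proper closed since $f$ is finite); the paper elides this by writing $f^n(P)\notin Y_{f,D,j}$ at the point where Vojta is invoked. In short, this is the paper's proof, with a modest cleanup.
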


\begin{remark}
\label{remark:mainthmj0}
Taking~$j=0$ in Theorem~\ref{theorem:mainthm},
condition~\eqref{eqn:degDjdjgt} reduces to the assumption that~$D$
itself is a reduced normal crossings divisor satisfying
\[
  \deg D > \frac{d-1}{d-2}\cdot \deg(-\CanDiv_X).
\]
This is the version of Theorem~\ref{theorem:mainthm} that
we stated in the introduction.
\end{remark}

\begin{remark}
If the divisor $(f^j)^*D$ in Theorem~\ref{theorem:mainthm} is
already reduced, then in~$\NS(X)\otimes\RR$ we have
\[
  \D_j = \bigl((f^j)^*D\bigr)^\red
  = (f^j)^*D
  \equiv \frac{\deg D}{\deg H}(f^j)^*H
  \equiv \frac{\deg D}{\deg H}d^jH
  \equiv d^jD.
\]
After a little bit of algebra, the degree
condition~\eqref{eqn:degDjdjgt} then becomes
\[
  \deg(D) > \frac{d-1}{d-2}\cdot \frac{\deg(-\CanDiv_X)}{d^j}.
\]
Hence in  Theorem~\ref{theorem:mainthm} 
it suffices to assume that~$(f^j)^*D$ is reduced for some
\[
  j > \log_d\left(\frac{d-1}{d-2}\cdot \frac{\deg(-\CanDiv_X)}{\deg(D)}\right).
\]
\end{remark}

\begin{remark}
It would be interesting to weaken some of the hypotheses in
Theorem~\ref{theorem:mainthm}. For example, what happens if~$\NS(X_K)$
has rank larger than~$1$?  Or we could take a dynamical system that is
not quite polarized. For example, suppose that~$f_K$ is an
automorphism, and suppose that there are nef divisors~$D^+$ and~$D^-$
that are eigendivisors for~$f_K$ and~$f_K^{-1}$, respectively, such
that~$D^++D^-$ is ample. This is the situation for certain~K3
surfaces~\cite{silverman:K3heights}. Or we could take an affine
morphism~$f:\AA^N\to\AA^N$ that does not extend to a morphism
of~$\PP^N$.  In this setting, we mention that if~$f$ is a regular
automorphism, then there is a good theory of canonical
heights~\cite{arxiv0909.3573,arxiv0909.3107} that might be useful.
\end{remark}

\begin{proof}[Proof of Theorem~$\ref{theorem:mainthm}$]
To ease notation, we write~$f$ for~$f_K$, and having chosen an ample
divisor~$H$, we normalize our degree function to
satisfy~$\deg(H)=1$. We fix a model~$X_R$ for~$X_K$, and when applying
Vojta's conjecture, we take~$S$ to be the finite set of places
\[
  S = M_K \setminus \Spec(R).
\]
We observe that a prime~$\gp\in\Spec R$ is a primitive divisor
for~$f^n(P)$ relative to~$D$ if and only if
\[
  \ord_\gp\bigl(f^n(P),D)_R>0,
  \text{ and }
  \ord_\gp\bigl(f^m(P),D)_R=0
  \quad\text{for all $0\le m<n$.}
\]
In view of the definition 
\[
  \l_{D,\gp}(Q) = \ord_\gp (Q\cdot D)_R \cdot \log N_\gp
\]
of our local height functions (Remark~\ref{remark:deflDgpQ}), we see
that~$f^n(P)$ will have a primitive  divisor relative to~$D$ if
the following sum is strictly positive:
\begin{equation}
  \label{eqn:issumposq}
  B_n :=
  \sum_{\substack{\gp\in\Spec R\\ \hw\l_{D,\gp}(f^mP)=0 \text{ for all $m<n$}\hw\\  }}
    \l_{D,\gp}(f^nP).
\end{equation}
We take a point~$f^n(P)\notin Y_{f,D,j}$ which is not one of the
finitely many exceptions in Vojta's conjecture
(Conjecture~\ref{conjecture:vojtatrunc}) and compute
\begin{align*}
  B_n
  &\ge \sum_{\substack{\gp\in\Spec R\\ \l_{D,\gp}(f^nP) > 0\\
        \hw\l_{D,\gp}(f^mP)=0 \text{ for all $m<n$}\hw\\  }} \log N_\gp \\
  &\ge \sum_{\substack{\gp\in\Spec R\\ \l_{D,\gp}(f^nP) > 0\\ }} \log N_\gp
    - 
   \sum_{\substack{\gp\in\Spec R\\  \hw\l_{D,\gp}(f^mP)>0 \text{ for some $m<n$}\hw\\  }} 
          \log N_\gp \\
  &= N^{(1)}_S(D,f^nP) -
   \sum_{\substack{\gp\in\Spec R\\  \hw\l_{D,\gp}(f^mP)>0 \text{ for some $m<n$}\hw\\  }} 
          \log N_\gp \\
  &\ge N^{(1)}_S(D,f^nP) -
   \sum_{m=0}^{n-1} \sum_{\gp\in\Spec R} \l_{D,\gp}(f^mP) \\
  &\ge N^{(1)}_S(D,f^nP) - \sum_{m=0}^{n-1} h_D(f^mP) - O(1) \\*
  &\omit\hfill 
   since $D$ is effective, $\l_{D,v}$ is bounded below for all $v\in M_K$,\\
  &= N^{(1)}_S\bigl(((f^j)^*D)^\red,f^{n-j}(P)\bigr)
          - \sum_{m=0}^{n-1} h_D(f^mP) - O(1) 
    \quad\text{from \eqref{eqn:NDfjPeqNfjDredP},}\\
  &= N^{(1)}_S\bigl(\D_j,f^{n-j}(P)\bigr) 
          - \sum_{m=0}^{n-1} h_D(f^mP) - O(1) \\*
  &\omit\hfill using our notation $\D_j=((f^j)^*D)^\red$, \\
  &\ge h_{\CanDiv_X+\D_j}\bigl(f^{n-j}(P)\bigr) - \e h_H\bigl(f^{n-j}(P)\bigr)
          - \sum_{m=0}^{n-1} h_D(f^mP) - O(1) \\*
  &\omit\hfill from Vojta's conjecture
        (Conjecture~\ref{conjecture:vojtatrunc}),\\
  &\ge \bigl(\deg(\CanDiv_X+\D_j)\bigr)-2\e)h_H\bigl(f^{n-j}(P)\bigr) \\*
    &\hspace{3em}{}      - \sum_{m=0}^{n-1} (\deg D + \e)h_H(f^mP) - O(1) 
        \quad\text{from Proposition \ref{proposition:htforalgeq},}\\
  &\ge \bigl(\deg(\CanDiv_X+\D_j)-2\e\bigr)\hhat_{f,H}\bigl(f^{n-j}(P)\bigr) \\*
    &\hspace{3em}{}     - \sum_{m=0}^{n-1} (\deg D + \e)\hhat_{f,H}(f^mP) - O(n) 
      \quad\text{from Theorem~\ref{theorem:canht}(a),}\\
  &= \bigl(\deg(\CanDiv_X+\D_j)-2\e\bigr)d^{n-j}\hhat_{f,H}(P) \\*
    &\hspace{3em}{} - \sum_{m=0}^{n-1} (\deg D + \e)d^m\hhat_{f,H}(P) - O(n) 
      \quad\text{from Theorem~\ref{theorem:canht}(b),}\\
  &\ge \left(\frac{\deg \CanDiv_X + \deg \D_j -2\e}{d^j}
        - \frac{\deg D+\e}{d-1}\right)d^n\hhat_{f,H}(P) - O(n).
\end{align*}
Using the assumed bound~\eqref{eqn:degDjdjgt} on~$\deg\D_j$, we see
that if we choose~$\e$ sufficiently small, then there is a
constant~$\kappa>0$ that does not depend on~$n$ such that
\[
  B_n \ge \kappa d^n\hhat_{f,H}(P) - O(n).
\]
Since we have also assumed that $\hhat_{f,H}(P)>0$, this proves that
the sum~$B_n$ defined by~\eqref{eqn:issumposq} is positive for all
sufficiently large~$n$ such that \text{$f^n(P)\notin Y_{f,D,j}$}
and~$f^n(P)$ is not one of the finitely many exceptions in Vojta's
conjecture. It follows that all such~$f^n(P)$ have a primitive divisor
relative to~$D$, and hence they are not in the Zsigmondy set
$\Zcal\bigl( \Orbit_f(P), D )$.  This completes the proof that
there are only finitely many $n\in\Zcal\bigl( \Orbit_f(P), D \bigr)$
such that $f^n(P)\notin  Y_{f,D,j}$.
\end{proof}

\section{Primitive Divisors and Wandering Targets in Algebraic Groups}
\label{section:primdivalggps}

Classical Zsigmondy problems are associated with algebraic groups.
Thus let~$G_K$ be a (commutative) algebraic group, let~$G_R$ be a model
for~$G_K$ over~$R$, and let~$\g\in G_K(K)$. In this setting, one can
study the Zsigmondy set associated to the sequence of
ideals~$\ga_n=\ga_n(\g)\subset R$ defined by the property that~$\ga_n$
is the smallest ideal with the property that
\[
  \g^n \equiv 1 \pmod{\ga_n}
  \quad\text{in $G_R(R)$.}
\]
The sequence~$(\ga_n)$ is a divisibility sequence, i.e., if~$m\mid n$,
then~$\ga_m\mid\ga_n$.  See~\cite[Section~6]{MR2162351}, and
especially~\cite[Proposition~8]{MR2162351}, for the general set-up.
If~$G_K=\GG_m$, then we obtain classical divisibility sequences such
as~$u^n-v^n$, while if~$G_K$ is an elliptic curve, then we obtain
classical elliptic divisibility sequences~\cite{MR0023275}. In both
cases, Zsigmondy sets are finite. 
\par
The situation for~$G_K=\GG_m^2$ is very different.  In this case we
obtain sequences such as $a_n=\gcd(u^n-1,v^n-1)$, for which Ailon and
Rudnick conjecture that there are infinitely many~$n$ such that
$a_n=1$, so in particular the Zsigmondy set is
infinite~\cite{MR2046966}.  In the opposite direction, Bugeaud,
Corvaja, and Zannier~\cite{MR1953049} use deep methods to prove that
$\lim_{n\to\infty}n^{-1}\log a_n=0$; and the author~\cite{MR2162351}
showed that Vojta's conjecture implies a far-reaching generalization
of the results in~\cite{MR1953049}.
\par
More generally, if $\dim(G_K)\ge2$, then excluding certain obvious
degenerate situations, one expects that the Zsigmondy set of the
sequence~$(\ga_n)$ should be infinite. Intuitively, this should be
true because $R$-valued points in~$G_R(R)$ are subschemes of
dimension~$1$, so two such points are unlikely to intersect in~$G_R$ if
$\dim(G_R)\ge3$. (Note that $\dim(G_R)=\dim(G_K)+1$, since~$\Spec(R)$
has dimension~$1$.)
\par
The underlying reason that the sequence~$(\ga_n)$ is a divisibility
sequence is because the target point, namely the identity element
of~$G$, is fixed by the~$n^{\text{th}}$-power maps that are being
applied to~$\g$.  If we instead choose a target point~$\b\in G$ that
does not have finite order and define~$\gb_n$ as the smallest ideal
such that
\[
  \g^n \equiv \b \pmod{\gb_n}
  \quad\text{in $G_R(R)$,}
\]
then~$(\gb_n)$ will not be a divisibility sequence, and even in the
case that~$\dim(G)=1$ it is a difficult question to determine if
the Zsigmondy set of~$(\gb_n)$ is finite.
\par
But in general, if we want to associate to the sequence of points~$\g^n$ a
Zsigmondy set that has a Zariski non-density property, then
we should pair the
point~$\g$, which has dimension~$1$ in~$G_R$, with a divisor~$D$,
which has codimension~$1$. This is what we did in
Section~\ref{section:notation}. We thus fix an ample effective
divisor~$D_K\in\Div(G_K)$, we let~$\G=(\g^n)_{n\ge0}$, and we consider
the Zsigmondy set~$\Zcal(\G,D)$. Taking~$G=\GG_m$ and $D_K=(1)$, or
taking~$G$ an elliptic curve and $D_K=(O)$, we recover the classical
divisibility sequences whose Zsigmondy sets are finite. For higher
dimensional~$G$, if the divisor~$D_K$ is not fixed by the powering map
in the group, it seems a difficult question to
characterize~$\Zcal(\G,D)$, even if one assumes Vojta's conjecture.
The difficulty comes from two facts.  First, the sequence of
ideals~$(\g^n,D)_R$ is not a divisibility sequence, so when checking
if~$\g^n$ has a primitive prime divisor, we must exclude all primes
dividing terms with~$m<n$. (If it were a divisibility sequence, we
would only need to consider the terms with~$m\mid n$.) Second, the
terms in the sequence~$\log\Norm_{K/\QQ}(\g^n,D)_R$ grow only polynomially
in~$n$, so the magnitude of the terms with~$m<n$ overwhelms
the~$n^{\text{th}}$ term. This is in contrast to the dynamical
setting, where the sequence~$\log\Norm_{K/\QQ}(f^n(P),D)_R$ grows
exponentially, which allows the~$n^{\text{th}}$ term to overwhelm the
terms with $m<n$. We thus raise the following question, which has an
affirmative answer for one-dimensional groups and $D_K=(1)$, but for
which there is currently no good evidence pro or con in higher
dimensions.

\begin{question}
Let $G_K$ be a commutative algebraic group, let~$D_K$ be an ample
effective normal crossings divisor, let~$\g\in G_K(K)$ be a wandering
point, and let $\G=(\g^n)_{n\ge0}$.  Is it true that the set
\[
  \bigl\{\g^n : n\notin\Zcal(\G,D)\bigr\}
\]
is not Zariski dense in~$G_K$?
\end{question}

\section{Primitive Divisors for Cycles Having Complementary Codimension}
\label{section:primcycles}
As noted in Section~\ref{section:primdivalggps}, one reason that we
expect Zsigmondy sets $\Zcal(\Qcal,D)$ to be small is because the
$R$-points~$Q_n$ have dimension~$1$ in~$X_R$ and the divisor~$D_R$ has
codimension~$1$ in~$X_R$, so as the size of~$Q_n$ grows, the
intersection ideal of~$Q_n$ and~$D_R$ should grow. This suggests
taking other sequences of subschemes of~$X_R$ having complementary
codimensions.

\begin{definition}
Let $K$,~$X_K$,~$R$, and~$X_R$ be as defined in
Section~\ref{section:notation}. Let~$\xi_K$ and~$\eta_K$ be
equidimensional subvarieties of~$X_K$ such that
\begin{equation}
  \label{eqn:codimdim1}
  \xi_K\cap\eta_K=\emptyset
  \quad\text{and}\quad
  \codim(\xi_K) + \codim(\eta_K) = \dim(X_K) + 1.
\end{equation}
Let~$\xi_R$ and~$\eta_R$ be the closures in~$X_R$ of~$\xi_K$
and~$\eta_K$, respectively. For $\gp\in\Spec(R)$, we set
\[
  i_\gp(\xi,\eta) = 1
  \quad\text{if}\quad
  (\tilde\xi_R\bmod\gp) \cap (\tilde\eta_R\bmod\gp) \ne \emptyset,
\]
and otherwise we set $i_\gp(\xi,\eta) = 0$. In other words, 
$i_\gp(\xi,\eta)$ is the indicator function for whether~$\xi$ and~$\eta$
intersect when they are reduced modulo~$\gp$. 
\end{definition}

\begin{definition}
Let $\Xcal=(\xi_n)_{n\ge0}$ and $\Ncal=(\eta_n)_{n\ge0}$ be sequences
of equidimensional subvarieties of~$X_K$
satisfying~\eqref{eqn:codimdim1}. We say that~$\gp\in\Spec(R)$ is a
\emph{primitive} (\emph{prime}) \emph{divisor of the pair~$(\xi_n,\eta_n)$} if
\[
  i_\gp(\xi_n,\eta_n)=1
  \qquad\text{and}\qquad
  i_\gp(\xi_m,\eta_m)=0
  \quad\text{for all $0\le m<n$.}
\]
Then the \emph{Zsigmondy set of the sequences~$\Xcal$ and~$\Ncal$} is
\[
  \Zcal(\Xcal,\Ncal) = \{n\ge0 : 
    \text{the pair $(\xi_n,\eta_n)$ has no primitive divisors}\}.
\]
\end{definition}

We make a specific conjecture in the dynamical setting.  This
conjecture can certainly be generalized, but it is not entirely clear
how to characterize the ``degenerate'' cases, so we are content with
the following modest conjecture.

\begin{conjecture}
\label{conjecture:codimge1}
Let $f:\PP^N_K\to\PP^N_K$ be a morphism of degree $d\ge2$.  Let~$\xi$
and~$\eta$ be smooth irreducible subvarieties of~$\PP^N_K$ 
with the following properties\textup:
\begin{parts}
\Part{\textbullet}
$\codim(\xi)+\codim(\eta)=N+1$.  
\Part{\textbullet}
$f^n(\xi)\cap\eta=\emptyset$ for all $n\ge0$.
\Part{\textbullet}
For any infinite sequence of integers~$n_i\ge0$, the
union of the varieties~$f^{n_i}(\xi)$ is Zariski dense in~$\PP^N$.
\end{parts}
Let~$\Xcal=\bigl(f^n(\xi)\bigr)$ be the sequence consisting of the
$f$-orbit of~$\xi$, and let~$\Ncal=(\eta)$ be the constant
sequence. Then the Zsigmondy set $\Zcal(\Xcal,\Ncal)$ is finite.
\end{conjecture}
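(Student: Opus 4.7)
The plan is to follow the template of Theorem~\ref{theorem:mainthm}, replacing the arithmetic intersection of a point and a divisor by an arithmetic intersection multiplicity $m_\gp\bigl(f^n(\xi),\eta\bigr)$ of two cycles of complementary codimension, and then to derive the required lower bound on the corresponding truncated counting function by pushing the whole question onto an auxiliary variety where Conjecture~\ref{conjecture:vojtatrunc} actually applies.

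First I would set up the arithmetic intersection. For each $\gp\in\Spec(R)$ at which $i_\gp(f^n(\xi),\eta)=1$, the Serre intersection length gives a positive integer $m_\gp$, and the global sum $\sum_\gp m_\gp\log N_\gp$ plays the role that $h_D(f^n(P))$ plays in the proof of Theorem~\ref{theorem:mainthm}. The ``primitive divisor obstruction'' step then transfers formally: if $n\in\Zcal(\Xcal,\Ncal)$ then
\[
\sum_{\gp:\,i_\gp(f^n(\xi),\eta)=1}\log N_\gp\;\le\;\sum_{m<n}\sum_{\gp:\,i_\gp(f^m(\xi),\eta)=1}\log N_\gp\;\le\;\sum_{m<n}h\bigl(f^m(\xi)\cdot\eta\bigr)+O(1).
\]
Arithmetic Bezout, combined with the identity $f^*[\eta]=d^{\codim\eta}[\eta]$ in the Chow ring of $\PP^N$ (which follows from $f^*H=dH$ and the fact that every Chow group of $\PP^N$ is cyclic), shows that each term on the right is $O\bigl(d^{m\cdot\codim\eta}\bigr)$, so the entire right-hand side is dominated by a constant multiple of $d^{n\cdot\codim\eta}$.

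To match this with a lower bound on the left I would push the problem onto the Chow variety $C$ parametrizing effective cycles in $\PP^N$ of dimension $\dim\xi$. The assignment $n\mapsto[f^n(\xi)]$ produces a sequence of $K$-points of $C$, the incidence locus $E_\eta=\{[V]\in C:V\cap\eta\ne\emptyset\}$ is an effective divisor on $C$, and tautologically
\[
i_\gp\bigl(f^n(\xi),\eta\bigr)=1\iff [f^n(\xi)]\in E_\eta\pmod{\gp},
\]
so the truncated counting function on the left of the previous display is exactly $N_S^{(1)}\bigl(E_\eta,[f^n(\xi)]\bigr)$. Applying Conjecture~\ref{conjecture:vojtatrunc} to a smooth projective model of (the relevant irreducible component of) $C$ equipped with the divisor $E_\eta$ would then produce a lower bound of the form $(1-\e)h_{\CanDiv_C+E_\eta}\bigl([f^n(\xi)]\bigr)-O(1)$ off an exceptional closed subvariety; the hypothesis that $\bigcup_i f^{n_i}(\xi)$ is Zariski dense in $\PP^N$ should then prevent the sequence $[f^n(\xi)]$ from being swallowed up by any such exceptional subvariety for more than finitely many~$n$.

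The principal obstacle is this last reduction. The Chow variety is typically singular and reducible with a poorly controlled canonical class, so neither the normal-crossings hypothesis on $E_\eta$ nor a numerical inequality analogous to~\eqref{eqn:degDjdjgt} is available off the shelf; one has to work on an explicit resolution of singularities and track how $E_\eta$ and the relevant heights transform. A second, more delicate, issue is that $f$ induces only a rational self-map $C\dashrightarrow C$, so Theorem~\ref{theorem:canht} does not directly produce a canonical height on $C$ of the form $h\bigl([f^n(\xi)]\bigr)\asymp d^{cn}$; that exponential lower bound, which is what ultimately beats the polynomial-in-$n$ sum on the right-hand side of the first display, would have to be recovered either by a direct arithmetic Bezout argument or by an equidistribution-type consequence of the Zariski-density hypothesis, and it is not clear that either strategy yields bookkeeping clean enough to conclude.
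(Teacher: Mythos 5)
This statement is a \emph{conjecture} in the paper, not a theorem: the author explicitly says, both at the end of the Introduction and at the start of Sections~\ref{section:primdivalggps} and~\ref{section:primcycles}, that even granting Vojta's conjecture he has no results in this cycle-intersection setting, and that Conjecture~\ref{conjecture:codimge1} is offered only as a ``modest conjecture.'' So there is no proof in the paper for you to be compared against, and your write-up should not be presenting itself as a proof attempt of an established result.

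That said, your sketch does identify the natural strategy (replace the arithmetic point-divisor intersection by a cycle-cycle intersection multiplicity, and try to run the Theorem~\ref{theorem:mainthm} template on a parameter space), and, to your credit, you also identify exactly where it breaks. Let me sharpen why the gaps you flag are fatal rather than merely technical. First, $f$ does not induce a morphism of the Chow variety $C$: $[V]\mapsto[f(V)]$ is only a rational map, and worse, its ``degree'' on $C$ is not controlled by $d$ alone, so there is no polarized dynamical system $(C,\tilde f,\tilde H)$ to which Theorem~\ref{theorem:canht} applies; the exponential growth $\hhat\bigl(f^n(\xi)\bigr)\asymp d^{cn}$, which is the engine of the whole argument in Theorem~\ref{theorem:mainthm}, is exactly what you cannot manufacture. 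Second, the Vojta exceptional set $Y\subset C$ lives in the Chow variety, not in $\PP^N$; the hypothesis that $\bigcup f^{n_i}(\xi)$ is Zariski dense in $\PP^N$ gives no control over whether the point sequence $[f^n(\xi)]$ lands in $Y\subset C$, so the ``finitely many exceptions'' conclusion does not follow. Third, the numerical condition that drives Theorem~\ref{theorem:mainthm}, the analogue of~\eqref{eqn:degDjdjgt} comparing $\deg E_\eta$ against $-\CanDiv_C$, is not something one can even state cleanly, since $C$ is singular, its canonical class is not understood, and $E_\eta$ has no reason to be a reduced normal crossings divisor on a resolution. In short: the proposal is an honest reconnaissance of an open problem, but it does not constitute (and cannot, with the tools in this paper, constitute) a proof of Conjecture~\ref{conjecture:codimge1}.
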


\begin{remark}
In the setting of Conjecture~\ref{conjecture:codimge1}, one might
consider a second morphism~$g:\PP^N_K\to\PP^N_K$ and replace the
constant sequence~$\Ncal=(\eta)$ with the moving
sequence~$\Ncal=\bigl(g^n(\eta)\bigr)$.  Then the question of whether
$\Zcal(\Xcal,\Ncal)$ is finite will depend in some complicated way on
how~$f$ and~$g$ interact dynamically. This should lead to all sorts of
interesting questions.
\end{remark}




\end{document}